\theoremstyle{definition}
\newtheorem{definition}{Definition}
\newtheorem{theorem}[definition]{Theorem}
\newtheorem{lemma}[definition]{Lemma}
\newtheorem{corollary}[definition]{Corollary}
\theoremstyle{remark}
\newtheorem{remark}[definition]{Remark}
\newcounter{enumctr}
\newcommand{\R}{\mathbb{R}}
\newcommand{\eps}{\varepsilon}
\renewcommand{\phi}{\varphi}
\newcommand{\E}{\mathbb{E}}
\newcommand{\mP}{\mathbb{P}}
\newcommand{\cF}{\mathcal{F}}
\renewcommand{\phi}{\varphi}
\newcommand{\rT}{\mathrm {T}}
\begin{document}


\title{Asymptotic separation between solutions of Caputo fractional stochastic differential equations}
\author{T.S. Doan
\footnote{Email: dtson@math.ac.vn, Institute of Mathematics, Vietnam Academy of Science and Technology, 18 Hoang Quoc Viet, Ha Noi, Viet Nam.}
, P. T. Huong\footnote{Email: pthuong175@gmail.com,  Le Quy Don Technical University, 236 Hoang Quoc Viet, Ha Noi, Viet Nam.}
, Peter. E. Kloeden 
\footnote{Email: kloeden@math.uni-frankfurt.de,  School of Mathematics and Statistics, Huazhong University of Science \& Technology,
Wuhan 430074, China.}
 and
 H. T. Tuan\footnote{Email: httuan@math.ac.vn, Institute of Mathematics, Vietnam Academy of Science and Technology, 18 Hoang Quoc Viet, Ha Noi, Viet Nam.}
}
\maketitle
\begin{abstract}
Using a temporally weighted norm we first establish a result on the global existence and uniqueness of  solutions for  Caputo fractional stochastic differential equations of order $\alpha\in(\frac{1}{2},1)$ whose coefficients satisfy a standard Lipschitz condition. For this class of systems  we then show that the asymptotic distance between two distinct solutions is greater than $t^{-\frac{1-\alpha}{2\alpha}-\eps}$ as $t \to \infty$ for any $\eps>0$. As a consequence,  the mean square Lyapunov exponent of an arbitrary non-trivial solution of a bounded linear Caputo fractional stochastic differential equation is always non-negative.
\end{abstract}

Fractional stochastic differential equations, Existence and uniqueness solutions, Temporally weighted norm,  Continuous dependence on the initial condition, Asymptotic behavior, Lyapunov exponents



\section{Introduction}
Fractional differential equations is now receiving an increasing attention due to their applications in a variety of disciplines such as mechanics, physics, chemistry, biology, electrical engineering, control theory, viscoelasticity, heat conduction in materials with memory, amongst others. 
For more details, we refer the interested reader to the monographs  \cite{Kamal},    \cite{Podlubny},  \cite{Oldham},   \cite{Diethelm},   \cite{Miller},    \cite{Samko}      and the references therein.

In contrast to the huge  number of publications in deterministic fractional differential equations,  there  have been only a few papers dealing with stochastic differential equations involving with a Caputo fractional time derivative and most of these articles have  attempted  to establish a result  on the existence and uniqueness of solutions. Here, we distinguish two type of solutions:

 The first one is mild solutions and we refer the reader to  \cite{Benchaabane1_17,Sakthivel_13}  for a result on the existence and uniqueness of this type of solutions.

 The other type of solution is defined as a solution of an associated stochastic integral equations  and as far as we are aware the only reference for the question of existence and uniqueness of this type of solutions is \cite{Kloeden_16}. However, in this paper there is a gap in the argument about the successive approximation method for extending the result about the local existence and uniqueness of solutions to the global existence and uniqueness of solutions. This gap comes from the fact that the kernel in the stochastic integral equations depends on time and this fact is a characteristic property of Caputo fractional systems, see Remark \ref{Remark1} for a more detail.

To fill in the gap mentioned above in \cite{Kloeden_16}, a special weighted norm called Bielecki type norm is introduced. With respect to this norm, we are able to prove that the operator associated with the stochastic integral equation is globally contractive and its fixed point gives rise to the appropriate  global solution of the system. Furthermore, we also show that the solutions depend continuously on the initial values.

After showing the existence of global solutions, our interest is to investigate the asymptotic behavior of solutions. Our contribution in this direction is to establish a lower bound on the asymptotic distance between two distinct solutions of a fractional stochastic differential equation. As a consequence, we show that the mean square Lyapunov exponent of an arbitrary non-trivial solution of a bounded linear Caputo fractional stochastic differential equation is always non-negative. This surprising fact was  obtained   in the deterministic fractional differential equations  \cite{Cong_14}.

The paper is structured as follows: In Section \ref{Section2}, we introduce briefly about Caputo fractional stochastic differential equations and state the main results of the paper. The first part of Section \ref{Section3} is devoted to show the result on the global existence and uniqueness of solutions (Theorem \ref{main_result_1}). The second main result (Theorem \ref{main_result2}) concerning a lower bound on the asymptotic separation of solutions is proved in the second part of Section \ref{Section3}.
\section{Preliminaries and the statement of the main results}\label{Section2}
Consider a Caputo fractional stochastic differential equation (for short Caputo FSDE) of order $\alpha\in(\frac{1}{2},1)$ of the following form
\begin{equation}\label{MainEq_1}
^{\!C}D^{\alpha}_{0+} X(t)=b(t,X(t)) + \sigma(t, X(t))\;\frac{dW_t}{dt},
\end{equation}
where $b,\sigma:[0,\infty)\times \R^d\rightarrow \R^{d}$, $\sigma:[0,\infty)\times \R^d\rightarrow \R^{d}$ are measurable and $(W_t)_{t\in[0,\infty)}$ is a standard scalar Brownian motion on an underlying complete filtered probability space $(\Omega,\cF,\mathbb{F}:=\{\cF_t\}_{t\in [0,\infty)},\mP)$. For each $t\in[0,\infty)$, let $\frak X_t:=\mathbb{L}^2(\Omega,\cF_t,\mP)$ denote the space of all  $\cF_t$-measurable, mean square integrable functions $f=(f_1,,\dots,f_d)^{\rT}:\Omega\rightarrow \R^d$ with
\[
\|f\|_{\rm ms}:=\sqrt{\sum_{i=1}^d \mathbb{E}(|f_i|^2)}=\sqrt{\E \|f\|^2},
\]
where $\R^d$ is endowed with the standard Euclidean norm. A process $X:[0,\infty)\rightarrow \mathbb{L}(\Omega,\cF,\mP)$ is said to be $\mathbb{F}$-adapted if $X(t)\in \frak X_t$ for all $t\geq 0$.
For each $\eta\in \frak X_0$, a $\mathbb{F}$-adapted process $X$ is called a solution of \eqref{MainEq_1} with the initial condition $X(0)=\eta$ if the following equality holds for $t\in [0,\infty)$
\begin{equation}\label{IntegrableForm}
X(t)=\eta+\frac{1}{\Gamma(\alpha)}
\left(
\int_0^t (t-\tau)^{\alpha -1} b(\tau,X(\tau))\;d\tau+
\int_0^t (t-\tau)^{\alpha -1} \sigma(\tau,X(\tau))\;dW_\tau\right),
\end{equation}
where $\Gamma(\alpha):=\int_0^\infty \tau^{\alpha-1}\exp{(-\tau)}\;d\tau$ is the Gamma function, see \cite[p. 209]{Kloeden_16}. In the remaining of the paper, we assume that the coefficients $b$ and $\sigma$ satisfy the following  standard conditions:
\begin{itemize}
\item [(H1)] There exists $L>0$ such that for all $x,y\in\R^d$, $t\in [0,\infty)$
\begin{equation*}\label{L_cond}
\|b(t,x)-b(t,y)\|+ \|\sigma(t,x)-\sigma(t,y)\|\leq L\|x-y\|.
\end{equation*}
\item [(H2)]  $\sigma(\cdot,0)$  is essentially bounded, i.e.
\[
\|\sigma(\cdot,0)\|_{\infty}:=\mbox{ess}\hspace{-3mm}\sup_{\tau\in [0,\infty)}\|\sigma(\tau,0)\|<\infty
\]
 and $b(\cdot,0)$ is $\mathbb{L}^2$ integrable, i.e.
\[
\int_0^\infty
\|b(\tau,0)\|^2\;d\tau<\infty.
\]
\end{itemize}
Our first result in this paper is to show the global existence and uniqueness solutions of \eqref{MainEq_1} when (H1) and (H2) hold. Furthermore, we also show the continuity dependence of solutions on the initial values.
\begin{theorem}[Global existence and uniqueness \& Continuity dependence on the initial values of  solutions of Caputo FSDE]\label{main_result_1}
Suppose that {\textup{(H1)}} and {\textup{(H2)}} hold.  Then  
\begin{itemize}
\item [(i)]  for any $\eta\in \frak X_0$, the initial value problem \eqref{MainEq_1} with the initial condition $X(0)=\eta$ has a unique global solution on the whole interval $[0,\infty)$ denoted by $\varphi(\cdot,\eta)$;
\item [(ii)]  on  any bounded time interval $[0,T]$, where $T>0$, the solution $\phi(\cdot,\eta)$ depends continuously on $\eta$, i.e.
\[
\lim_{\zeta\to \eta} \sup_{t\in [0,T]}\|\phi(t,\zeta)-\phi(t,\eta)\|_{\rm ms}=0.
\]
\end{itemize}
\end{theorem}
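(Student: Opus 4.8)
The plan is to recast the integral equation \eqref{IntegrableForm} as a fixed-point problem and apply the Banach contraction principle, the decisive device being a Bielecki-type weighted norm that renders the solution operator a \emph{global} contraction in spite of the time-dependent singular kernel $(t-\tau)^{\alpha-1}$. For a parameter $\gamma>0$ I would work in the space $\mathcal{H}_\gamma$ of mean-square continuous, $\mathbb{F}$-adapted processes $X$ (so that $X(t)\in\frak X_t$ for every $t$) for which
\[
\|X\|_\gamma:=\sup_{t\geq 0} e^{-\gamma t}\,\|X(t)\|_{\rm ms}<\infty,
\]
which is a Banach space, and define the operator
\[
(\mathcal{T}X)(t):=\eta+\frac{1}{\Gamma(\alpha)}\left(\int_0^t (t-\tau)^{\alpha-1}b(\tau,X(\tau))\,d\tau+\int_0^t (t-\tau)^{\alpha-1}\sigma(\tau,X(\tau))\,dW_\tau\right).
\]
The first task is to verify that $\mathcal{T}$ maps $\mathcal{H}_\gamma$ into itself. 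Writing $\|b(\tau,x)\|\leq L\|x\|+\|b(\tau,0)\|$ and $\|\sigma(\tau,x)\|\leq L\|x\|+\|\sigma(\tau,0)\|$ from {\textup{(H1)}}, the linear-in-$X$ contributions are handled exactly as in the contraction estimate below, while the inhomogeneous parts are controlled by {\textup{(H2)}}: by the Cauchy--Schwarz inequality and the It\^o isometry the relevant integrals are bounded by $\frac{t^{2\alpha-1}}{2\alpha-1}\int_0^\infty\|b(\tau,0)\|^2\,d\tau$ and $\|\sigma(\cdot,0)\|_\infty^2\frac{t^{2\alpha-1}}{2\alpha-1}$, and since $e^{-2\gamma t}t^{2\alpha-1}$ is bounded on $[0,\infty)$ these stay finite in the weighted norm. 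Mean-square continuity of $\mathcal{T}X$ follows from a routine estimate on increments of the fractional integral.

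For the contraction estimate I would split $\|(\mathcal{T}X)(t)-(\mathcal{T}Y)(t)\|_{\rm ms}^2$ into a drift and a diffusion part using $(a+b)^2\leq 2a^2+2b^2$, and abbreviate $\Delta b(\tau):=b(\tau,X(\tau))-b(\tau,Y(\tau))$ and $\Delta\sigma$ analogously. For the drift integral the key trick is a weighted Cauchy--Schwarz split that distributes the exponential weight across the two factors, writing the integrand as
\[
\left[(t-\tau)^{\frac{\alpha-1}{2}}e^{\frac{\gamma(\tau-t)}{2}}\right]\cdot\left[(t-\tau)^{\frac{\alpha-1}{2}}e^{\frac{\gamma(t-\tau)}{2}}\,\|\Delta b(\tau)\|\right];
\]
the square of the first factor integrates to at most $\Gamma(\alpha)\gamma^{-\alpha}$, and after invoking {\textup{(H1)}} and taking expectations the second contributes $L^2 e^{2\gamma t}\Gamma(\alpha)\gamma^{-\alpha}\|X-Y\|_\gamma^2$. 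For the diffusion integral the It\^o isometry turns the stochastic integral into $\int_0^t(t-\tau)^{2\alpha-2}\,\mathbb{E}\|\Delta\sigma(\tau)\|^2\,d\tau$, and with the substitution $s=t-\tau$ and $\int_0^\infty s^{2\alpha-2}e^{-2\gamma s}\,ds=\Gamma(2\alpha-1)(2\gamma)^{-(2\alpha-1)}$ one obtains the analogous bound. Multiplying by $e^{-2\gamma t}$ and taking the supremum over $t$ yields
\[
\|\mathcal{T}X-\mathcal{T}Y\|_\gamma^2\leq \frac{2L^2}{\Gamma(\alpha)^2}\left(\frac{\Gamma(\alpha)^2}{\gamma^{2\alpha}}+\frac{\Gamma(2\alpha-1)}{(2\gamma)^{2\alpha-1}}\right)\|X-Y\|_\gamma^2,
\]
so for $\gamma$ large the prefactor is strictly below $1$; the unique fixed point of $\mathcal{T}$ is the sought global solution $\varphi(\cdot,\eta)$, proving (i).

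For the continuous dependence (ii) I would denote by $\mathcal{T}_0$ the purely integral part of $\mathcal{T}$ (without the $\eta$ term), which by the previous paragraph is a $\kappa$-contraction with $\kappa<1$. The two solutions satisfy $X-Y=(\eta-\zeta)+(\mathcal{T}_0 X-\mathcal{T}_0 Y)$, whence $\|X-Y\|_\gamma\leq\|\eta-\zeta\|_\gamma+\kappa\|X-Y\|_\gamma$; since $\|\eta-\zeta\|_\gamma=\|\eta-\zeta\|_{\rm ms}$ this gives $\|X-Y\|_\gamma\leq(1-\kappa)^{-1}\|\eta-\zeta\|_{\rm ms}$. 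On the bounded interval $[0,T]$ one then recovers the unweighted bound
\[
\sup_{t\in[0,T]}\|\phi(t,\zeta)-\phi(t,\eta)\|_{\rm ms}\leq\frac{e^{\gamma T}}{1-\kappa}\,\|\zeta-\eta\|_{\rm ms},
\]
whose right-hand side tends to $0$ as $\zeta\to\eta$.

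The crux of the argument --- and precisely where the naive successive-approximation scheme of \cite{Kloeden_16} breaks down --- is controlling the non-convolution singular kernel $(t-\tau)^{\alpha-1}$ \emph{uniformly} in $t$; the weighted norm converts the otherwise growing factors $t^{\alpha}$ and $t^{2\alpha-1}$ into harmless constants. The hypothesis $\alpha\in(\frac12,1)$ is indispensable, since it is exactly the condition guaranteeing that the It\^o isometry produces an integrable singularity $(t-\tau)^{2\alpha-2}$ with exponent $2\alpha-2>-1$. I expect the main labour to be bookkeeping rather than conceptual: verifying the self-mapping property and mean-square continuity, and tracking constants carefully enough that the contraction factor genuinely vanishes as $\gamma\to\infty$.
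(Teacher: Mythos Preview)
Your argument is correct and shares the paper's overall strategy --- a Bielecki-type weighted norm plus Banach's fixed-point theorem --- but differs in one substantive choice. The paper works on a fixed bounded interval $[0,T]$ and uses the Mittag--Leffler weight
\[
\|X\|_\gamma^2 := \sup_{t\in[0,T]}\frac{\mathbb{E}\|X(t)\|^2}{E_{2\alpha-1}(\gamma t^{2\alpha-1})},
\]
the key technical lemma being the identity $E_{2\alpha-1}(\gamma t^{2\alpha-1}) = 1 + \frac{\gamma}{\Gamma(2\alpha-1)}\int_0^t (t-\tau)^{2\alpha-2} E_{2\alpha-1}(\gamma\tau^{2\alpha-1})\,d\tau$, which is exactly adapted to absorb the kernel $(t-\tau)^{2\alpha-2}$ and yields a contraction factor of order $\gamma^{-1/2}$. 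For the drift the paper uses the crude Cauchy--Schwarz split $1\cdot(t-\tau)^{\alpha-1}\|\Delta b\|$, picking up a factor $t\le T$ that is harmless on a bounded interval. Your exponential weight $e^{-\gamma t}$ is the classical Bielecki choice and lets you work directly on $[0,\infty)$; because the stray factor $t$ would then be unbounded, you are forced into the more refined weighted Cauchy--Schwarz split for the drift, but in exchange you avoid special functions altogether and obtain the global solution in a single application of the contraction principle rather than interval by interval. For part~(ii) both proofs amount to the same rearrangement of the contraction inequality.
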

Our next result is to establish a lower bound on the asymptotic separation between two distinct solutions of \eqref{MainEq_1}.
\begin{theorem}\label{main_result2}
Let $\eta,\zeta\in  \frak X_0$ such that $\eta\not=\zeta$. Then, for any $\eps>0$, 
\begin{equation*}\label{convergence rate}
\limsup_{t\to\infty}\;t^{\frac{1-\alpha}{2\alpha}+\eps}\|\varphi(t,\eta)-\varphi(t,\zeta)\|_{\rm{ms}}=\infty.
\end{equation*}
\end{theorem}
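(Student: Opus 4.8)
The plan is to argue by contradiction directly from the stochastic integral equation \eqref{IntegrableForm}. Suppose the conclusion fails: then there is an $\eps>0$ for which $t^{\frac{1-\alpha}{2\alpha}+\eps}\|\varphi(t,\eta)-\varphi(t,\zeta)\|_{\rm ms}$ stays bounded as $t\to\infty$. Writing $\Delta(t):=\varphi(t,\eta)-\varphi(t,\zeta)$, this gives a constant $C>0$ and a time $T_0>0$ with $\|\Delta(t)\|_{\rm ms}\le C\,t^{-\frac{1-\alpha}{2\alpha}-\eps}$ for $t\ge T_0$. Since $\Delta$ is $\|\cdot\|_{\rm ms}$-continuous on the compact interval $[0,T_0]$ by Theorem \ref{main_result_1}(ii), the map $t\mapsto\|\Delta(t)\|_{\rm ms}$ is then bounded on all of $[0,\infty)$ and tends to $0$. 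The aim is to feed this decay back into \eqref{IntegrableForm} and force $\eta=\zeta$, contradicting $\eta\neq\zeta$.

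Subtracting the two copies of \eqref{IntegrableForm} yields
\begin{equation*}
\eta-\zeta=\Delta(t)-\frac{1}{\Gamma(\alpha)}\int_0^t (t-\tau)^{\alpha-1}\,\delta b(\tau)\,d\tau-\frac{1}{\Gamma(\alpha)}\int_0^t (t-\tau)^{\alpha-1}\,\delta\sigma(\tau)\,dW_\tau,
\end{equation*}
where $\delta b(\tau):=b(\tau,\varphi(\tau,\eta))-b(\tau,\varphi(\tau,\zeta))$ and $\delta\sigma(\tau):=\sigma(\tau,\varphi(\tau,\eta))-\sigma(\tau,\varphi(\tau,\zeta))$. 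I would take $\|\cdot\|_{\rm ms}$ on both sides and bound the three pieces separately. The drift convolution is handled by Minkowski's integral inequality together with the Lipschitz bound {(H1)}, giving a contribution dominated by $\frac{L}{\Gamma(\alpha)}\int_0^t (t-\tau)^{\alpha-1}\|\Delta(\tau)\|_{\rm ms}\,d\tau$. The stochastic convolution is where $\alpha>\frac12$ is essential: since $(t-\tau)^{\alpha-1}$ is deterministic, the It\^o isometry gives
\begin{equation*}
\left\|\frac{1}{\Gamma(\alpha)}\int_0^t (t-\tau)^{\alpha-1}\delta\sigma(\tau)\,dW_\tau\right\|_{\rm ms}^2=\frac{1}{\Gamma(\alpha)^2}\int_0^t (t-\tau)^{2\alpha-2}\|\delta\sigma(\tau)\|_{\rm ms}^2\,d\tau\le\frac{L^2}{\Gamma(\alpha)^2}\int_0^t (t-\tau)^{2\alpha-2}\|\Delta(\tau)\|_{\rm ms}^2\,d\tau,
\end{equation*}
and the exponent $2\alpha-2>-1$ is precisely what makes this kernel locally integrable.

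It then remains to show that both convolution terms vanish as $t\to\infty$ under the assumed decay. I would split each integral at $\tau=t/2$. On $[0,t/2]$ the kernels are bounded by $(t/2)^{\alpha-1}$ and $(t/2)^{2\alpha-2}$, so these pieces are controlled by negative powers of $t$ times the (bounded, and in the relevant range integrable) tails of $\|\Delta\|_{\rm ms}$; on $[t/2,t]$ one inserts the pointwise bound $\|\Delta(\tau)\|_{\rm ms}\le C\tau^{-\frac{1-\alpha}{2\alpha}-\eps}$, pulls the slowly varying factor out, and integrates the remaining kernel explicitly, which is finite because $\alpha-1>-1$ and $2\alpha-1>0$. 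The governing scale is set by the order $2\alpha-1$ of the stochastic convolution, and I expect the genuinely delicate point to be the exponent bookkeeping in this near-diagonal part $[t/2,t]$, where the singular kernel $(t-\tau)^{2\alpha-2}$ meets the decaying factor $\|\Delta(\tau)\|_{\rm ms}^2$: it is here that the sharp threshold $\frac{1-\alpha}{2\alpha}$ must be extracted, and a crude estimate would lose it.

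One auxiliary observation that should streamline the cross-term analysis, and clarify why the stochastic part rather than the drift dictates the rate, is an $\cF_0$-orthogonality: the process $t\mapsto\int_0^t (t-\tau)^{\alpha-1}\delta\sigma(\tau)\,dW_\tau$, for fixed upper-limit kernel, is a martingale vanishing at $0$, so its conditional expectation given $\cF_0$ is zero and hence $\E\langle \eta-\zeta,\ \int_0^t (t-\tau)^{\alpha-1}\delta\sigma(\tau)\,dW_\tau\rangle=0$. Pairing the displayed identity with the $\cF_0$-measurable increment $\eta-\zeta$ therefore annihilates the linear stochastic contribution and leaves the drift convolution together with $\Delta(t)$, which can be estimated directly. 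I expect the main obstacle to be organizing these estimates so that the competition between the drift kernel (order $\alpha$) and the diffusion kernel (order $2\alpha-1$) produces exactly the borderline rate $t^{-\frac{1-\alpha}{2\alpha}-\eps}$, and in verifying that $\eta\neq\zeta$ survives the limit so that the contradiction $\|\eta-\zeta\|_{\rm ms}=0$ is reached.
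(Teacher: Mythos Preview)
Your strategy coincides with the paper's: assume the $\limsup$ is finite, rewrite \eqref{IntegrableForm} to isolate $\eta-\zeta$, and drive the drift and stochastic convolutions to zero under the assumed decay to force $\eta=\zeta$. The paper handles the drift term via Cauchy--Schwarz together with a split at $\tau=t^{\delta}$ (for a carefully chosen $\delta\in(\alpha/\lambda,\frac{1-\alpha}{2})$) rather than Minkowski with a $t/2$ split, and it does not invoke your $\cF_0$-orthogonality observation, but these are minor technical variations within the same argument.
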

Finally, we give an application of the main results concerning the mean square Lyapunov exponent of non-trivial solutions to a bounded bilinear Caputo  FSDE. To formulate this result, we consider the following equation
\begin{equation}\label{linearEq}
^{C}D^\alpha_{0+}x(t)=A(t)x(t)+B(t)x(t)\frac{dW_t}{dt},
\end{equation}
where $A,B:[0,\infty)\rightarrow \R^{d\times d}$ are measurable and essentially bounded, i.e.
$$
\mbox{ess}\hspace{-2mm}\sup_{t\in [0,\infty)}\|A(t)\|, \,\,  \mbox{ess}\hspace{-2mm}\sup_{t\in [0,\infty)}\|B(t)\|<\infty.
$$
 By virtue of Theorem \ref{main_result_1}, for each  $\eta\in \frak X_0\setminus\{0\}$, there exists a unique the solution of \eqref{linearEq}, denoted by $\Phi(\cdot,\eta)$, satisfying the initial condition $X(0)=\eta$. The \emph{mean square Lyapunov exponent} of $\Phi(\cdot,\eta)$ is defined by
\begin{equation}\label{Mean-SquareSpectrum}
\lambda_{\rm{ms}}(\Phi(\cdot,\eta))
:=
\limsup_{t\to\infty}\frac{1}{t}\log\|\Phi(t,\eta)\|_{\rm ms},
\end{equation}
see e.g. \cite{Son_15}. In the following corollary, we show the non-negativity of the mean square Lyapunov exponent of an arbitrary non-trivial solution.
\begin{corollary}[Non-negativity of mean square Lyapunov exponent for solutions of linear Caputo fsde]\label{Cor1}
The mean square Lyapunov exponent of a nontrivial solution of \eqref{linearEq} is always non-negative, i.e.
\[
\lambda_{\rm{ms}}(\Phi(\cdot,\eta)) \ge 0\quad \hbox{for all}\;\; \eta\in \frak X_0\setminus\{0\}.
\]
\end{corollary}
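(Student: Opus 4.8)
The plan is to derive the corollary directly from Theorem \ref{main_result2} by comparing a nontrivial solution with the trivial one. First I would observe that the bilinear equation \eqref{linearEq} is the special case of \eqref{MainEq_1} with drift $b(t,x)=A(t)x$ and diffusion $\sigma(t,x)=B(t)x$. Since $A$ and $B$ are essentially bounded, both coefficients are globally Lipschitz in $x$ uniformly in $t$, so (H1) holds; moreover $b(t,0)=0$ and $\sigma(t,0)=0$, whence (H2) is satisfied trivially. Consequently Theorem \ref{main_result_1} applies and, by uniqueness, the initial value $\eta=0$ produces the identically zero solution, i.e.\ $\Phi(t,0)=\varphi(t,0)=0$ for all $t\ge 0$.

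Next, fix $\eta\in\frak X_0\setminus\{0\}$ and apply Theorem \ref{main_result2} to the pair $\eta\neq\zeta:=0$. Choosing any $\eps>0$ and using $\varphi(t,0)\equiv 0$, we obtain
\begin{equation*}
\limsup_{t\to\infty} t^{\frac{1-\alpha}{2\alpha}+\eps}\,\|\Phi(t,\eta)\|_{\rm ms}=\infty .
\end{equation*}
Hence there is a sequence $t_n\to\infty$ along which $t_n^{\frac{1-\alpha}{2\alpha}+\eps}\|\Phi(t_n,\eta)\|_{\rm ms}\ge 1$ for all large $n$, equivalently
\[
\|\Phi(t_n,\eta)\|_{\rm ms}\ \ge\ t_n^{-\left(\frac{1-\alpha}{2\alpha}+\eps\right)} .
\]

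Finally I would pass to the logarithmic growth rate. Taking logarithms and dividing by $t_n$ gives
\[
\frac{1}{t_n}\log\|\Phi(t_n,\eta)\|_{\rm ms}\ \ge\ -\left(\frac{1-\alpha}{2\alpha}+\eps\right)\frac{\log t_n}{t_n}\ \xrightarrow[n\to\infty]{}\ 0,
\]
since $(\log t_n)/t_n\to 0$. Because $\{t_n\}$ is a subsequence of $t\to\infty$, the $\limsup$ defining $\lambda_{\rm ms}$ in \eqref{Mean-SquareSpectrum} dominates the $\limsup$ taken along $t_n$, so
\[
\lambda_{\rm ms}(\Phi(\cdot,\eta))\ \ge\ \limsup_{n\to\infty}\frac{1}{t_n}\log\|\Phi(t_n,\eta)\|_{\rm ms}\ \ge\ 0,
\]
which is the claimed non-negativity.

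The argument is essentially a packaging of Theorem \ref{main_result2}, so there is no serious obstacle here; the only points requiring brief care are verifying that the bilinear system genuinely falls under hypotheses (H1)--(H2) and that homogeneity forces $\Phi(\cdot,0)\equiv 0$, both of which are immediate. It is worth stressing that the conclusion already follows from a single fixed $\eps>0$, so the freedom in the choice of $\eps$ in Theorem \ref{main_result2} is not even needed for the corollary.
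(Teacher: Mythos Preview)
Your proof is correct and follows the same approach as the paper: apply Theorem \ref{main_result2} with $\zeta=0$ (using that $\Phi(\cdot,0)\equiv 0$), extract a polynomial lower bound on $\|\Phi(t,\eta)\|_{\rm ms}$, and conclude that the logarithmic growth rate is nonnegative. Your subsequence argument is in fact more careful than the paper's version, which asserts the bound $\|\Phi(t,\eta)\|_{\rm ms}\ge t^{-(\frac{1-\alpha}{2\alpha}+\eps)}$ for \emph{all} $t\ge T$ directly from the $\limsup$---a step that, strictly speaking, only yields such a bound along a sequence $t_n\to\infty$, exactly as you have it.
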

\begin{proof}
Let $\eta\in \frak X_0\setminus\{0\}$ be arbitrary. Using Theorem \ref{main_result2},  we obtain 
\[
\limsup_{t\to\infty}t^{\frac{1-\alpha}{2\alpha}+\varepsilon}\|\Phi(t,\eta)\|_{\rm{ms}}=\infty,
\]
where $\eps>0$ is arbitrary. Hence, there exists $T>0$ such that
\[
\|\Phi(t,\eta)\|_{\rm{ms}}\geq t^{-\left(\frac{1-\alpha}{2\alpha}+\varepsilon\right)}\qquad \hbox{for all}\; t\geq T,
\]
which together with \eqref{Mean-SquareSpectrum} implies that
\[
\lambda_{\rm{ms}}(\Phi(\cdot,\eta))
\geq
\limsup_{t\to\infty}\frac{1}{t}\log \left( t^{-\left(\frac{1-\alpha}{2\alpha}+\varepsilon\right)}\right)
= 0.
\]
\end{proof}

\section{Proof of the main results}\label{Section3}
\subsection{Existence, uniqueness and continuity dependence on the initial values of solutions}
Our aim in this subsection is to prove the result on global existence, uniqueness and continuity dependence on the initial values of solutions to the equation \eqref{MainEq_1}. In fact, in order to prove Theorem \ref{main_result_1}(i) it is equivalent to show the existence and uniqueness solutions on an arbitrary interval $[0,T]$, where $T>0$ is arbitrary. In what follows we choose and fix a $T>0$ arbitrarily. 

 Let $\mathbb{H}^2([0,T])$ be the space of all the processes $X$ which are measurable, $\mathbb{F}_T$-adapted, where $\mathbb{F}_T:=\{\cF_t\}_{t\in [0,T]}$, and satisfies that
$$
\|X\|_{\mathbb{H}^2}:=\sup_{0\le t\le T}\|X(t)\|_{\rm ms}<\infty.
$$
Obviously, $(\mathbb{H}^2([0,T]),\|\cdot\|_{\mathbb{H}^2})$ is a Banach space. For any $\eta\in \mathfrak{X}_0$, we define an operator $\mathcal{T}_\eta: \mathbb{H}^2([0,T])\rightarrow \mathbb{H}^2([0,T])$ by
\begin{equation}\label{Operator}
T_\eta\xi(t)
:=
\eta+\frac{1}{\Gamma(\alpha)}
\left(\int_0^t (t-\tau)^{\alpha -1} b(\tau,\xi(\tau))\;d\tau+
\int_0^t (t-\tau)^{\alpha -1} \sigma(\tau,\xi(\tau))\;dW_\tau\right).
\end{equation}
The following lemma is devoted to showing that   this operator is well-defined.
%
%
%
%
\begin{lemma}\label{Welldefine}
For any $\eta\in \frak{X}_0$, the operator $\mathcal{T}_\eta$ is well-defined.
\end{lemma}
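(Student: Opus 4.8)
The plan is to fix an arbitrary $\xi\in\mathbb{H}^2([0,T])$ and verify that $\cT_\eta\xi$ again belongs to $\mathbb{H}^2([0,T])$. The measurability and $\mathbb{F}_T$-adaptedness of $\cT_\eta\xi$ follow routinely from the fact that both a pathwise Lebesgue integral and the It\^o integral of an adapted, progressively measurable integrand remain adapted, so the real content is the uniform bound $\sup_{0\le t\le T}\|\cT_\eta\xi(t)\|_{\rm ms}<\infty$. Applying the triangle inequality for $\|\cdot\|_{\rm ms}$ to \eqref{Operator}, I would treat the drift part and the diffusion part separately:
\[
\|\cT_\eta\xi(t)\|_{\rm ms}\le \|\eta\|_{\rm ms}+\frac{1}{\Gamma(\alpha)}\bigl(I_b(t)+I_\sigma(t)\bigr),
\]
where $I_b(t)$ and $I_\sigma(t)$ denote the ms-norms of the two stochastic integrals appearing in the definition of $\cT_\eta$.

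For the drift term I would use the Minkowski integral inequality to move the norm inside, followed by Cauchy--Schwarz in the time variable:
\[
I_b(t)\le \int_0^t (t-\tau)^{\alpha-1}\|b(\tau,\xi(\tau))\|_{\rm ms}\,d\tau\le\Bigl(\int_0^t (t-\tau)^{2\alpha-2}\,d\tau\Bigr)^{1/2}\Bigl(\int_0^t \|b(\tau,\xi(\tau))\|_{\rm ms}^2\,d\tau\Bigr)^{1/2}.
\]
The Lipschitz condition (H1) gives $\|b(\tau,\xi(\tau))\|_{\rm ms}\le \|b(\tau,0)\|+L\|\xi(\tau)\|_{\rm ms}$, so the second factor is controlled by $\int_0^\infty\|b(\tau,0)\|^2\,d\tau$, finite by (H2), together with $T\|\xi\|_{\mathbb{H}^2}^2$. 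For the diffusion term I would instead invoke the It\^o isometry,
\[
I_\sigma(t)^2=\int_0^t (t-\tau)^{2\alpha-2}\,\E\|\sigma(\tau,\xi(\tau))\|^2\,d\tau,
\]
and again use (H1) in the form $\E\|\sigma(\tau,\xi(\tau))\|^2\le 2\|\sigma(\tau,0)\|^2+2L^2\|\xi(\tau)\|_{\rm ms}^2$ together with the essential boundedness of $\sigma(\cdot,0)$ supplied by (H2). In both estimates the decisive quantity is the singular integral $\int_0^t (t-\tau)^{2\alpha-2}\,d\tau=\frac{t^{2\alpha-1}}{2\alpha-1}$.

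The main obstacle, and the reason the hypothesis $\alpha\in(\tfrac12,1)$ is imposed, is precisely the convergence of this last integral: the square of the kernel $(t-\tau)^{\alpha-1}$ is integrable near $\tau=t$ only when $2\alpha-2>-1$, that is $\alpha>\tfrac12$, and this is exactly what both the Cauchy--Schwarz bound for the drift and the It\^o isometry for the diffusion demand. Granting it, $\frac{t^{2\alpha-1}}{2\alpha-1}$ is bounded by $\frac{T^{2\alpha-1}}{2\alpha-1}$ on $[0,T]$, so each of $I_b(t)$ and $I_\sigma(t)$ is bounded uniformly in $t\in[0,T]$ by a constant depending only on $L$, $T$, $\alpha$, $\|\sigma(\cdot,0)\|_\infty$, $\int_0^\infty\|b(\tau,0)\|^2\,d\tau$ and $\|\xi\|_{\mathbb{H}^2}$. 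Taking the supremum over $t\in[0,T]$ and adding $\|\eta\|_{\rm ms}<\infty$ yields $\|\cT_\eta\xi\|_{\mathbb{H}^2}<\infty$, which is the assertion. The same square-integrability check also guarantees that the It\^o integral in \eqref{Operator} is genuinely well-defined as an $\mathbb{L}^2$-bounded process, so no separate verification is required there.
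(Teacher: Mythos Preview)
Your proof is correct and follows essentially the same route as the paper's: split into drift and diffusion parts, control the drift via Cauchy--Schwarz in time (the paper works with the squared norm and H\"older directly, you use Minkowski's integral inequality first, which is a cosmetic difference), control the diffusion via the It\^o isometry, and then invoke (H1)--(H2) together with the integrability of $(t-\tau)^{2\alpha-2}$. Your explicit remark that this last integrability is exactly where the hypothesis $\alpha>\tfrac12$ enters is a nice addition the paper leaves implicit.
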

\begin{proof}
Let $\xi\in \mathbb{H}^2([0,T])$ be arbitrary. From the definition of $\mathcal{T}_\eta\xi$ as in \eqref{Operator} and the inequality $\|x+y+z\|^2\leq 3 (\|x\|^2+ \|y\|^2+ \|z\|^2)$ for all $x,y,z\in \R^d$, we have  for all $t\in [0,T]$
\begin{equation}\label{FirstEstimate}
\begin{array}{ll}
\left\|\mathcal{ T}_\eta \xi(t)\right\|_{\rm ms}^2
\le & 3 \|\eta\|_{\rm ms}^2 + \frac{3}{{\Gamma {{\left( \alpha  \right)}^2}}} \mathbb E{\left( \left\|{\int_0^t {{{\left( {t - \tau } \right)}^{\alpha  - 1}} {b\left( {\tau ,\xi \left( \tau  \right)} \right)}d\tau } }\right\|^2 \right)}\\[1ex]
&  + \frac{3}{{\Gamma {{\left( \alpha  \right)}^2}}} \mathbb E{\left(\left\| {\int\limits_0^t {{{\left( {t - \tau } \right)}^{\alpha  - 1}}{\sigma \left( {\tau ,\xi\left( \tau  \right)} \right)}d{W_\tau }} }\right\|^2 \right)}.
\end{array}
\end{equation}
By the H\"{o}lder inequality, we obtain
\begin{eqnarray}
\mathbb E{\left(\left\| {\int\limits_0^t {{{\left( {t - \tau } \right)}^{\alpha  - 1}}b\left( {\tau ,\xi\left( \tau  \right)} \right)d\tau } }\right\|^2 \right)}
&\leq &
 \int_0^t {{{\left( {t - \tau } \right)}^{2\alpha  - 2}}d\tau }\; \mathbb E\left( {\int_0^t {{{\left\| {b\left( {\tau ,\xi\left( \tau  \right)} \right)} \right\|}^2}d\tau } } \right)\notag\\
&=&
\frac{t^{2\alpha  - 1}}{2\alpha  - 1} \mathbb E\left( \int_0^t {{{\left\| {b\left( {\tau ,\xi\left( \tau  \right)} \right)} \right\|}^2}d\tau }  \right).\label{tam_1}
 \end{eqnarray}
From $\textup{(H1)}$, we derive 
\begin{eqnarray*}
\|b(\tau,\xi(\tau))\|^2
&\leq&
 2\left( \|b(\tau,\xi(\tau))-b(\tau,0)\|^2+ \|b(\tau,0)\|^2\right)\\[1.5ex]
&\leq &
2 L^2 \|\xi(\tau)\|^2+ 2 \|b(\tau,0)\|^2.
\end{eqnarray*}
Therefore,
\begin{eqnarray*}
\mathbb E\left( \int_0^t \left\| {b\left( {\tau ,\xi\left( \tau  \right)} \right)} \right\|^2d\tau   \right)
& \leq &
2L^2 \mathbb{E}\left(\int_0^t {{{\left\| {\xi\left( \tau  \right)} \right\|}^2}}\,d\tau \right)
+ 2 \int_0^t \|b(\tau,0)\|^2\;d\tau\\
&\leq &
2L^2 T \sup _{t \in \left[ {0,T} \right]} \mathbb E({\left\| {\xi\left( t  \right)} \right\|^2})
+
2 \int_0^T \|b(\tau,0)\|^2\;d\tau
\end{eqnarray*}
which together with \eqref{tam_1} implies that
\begin{equation}\label{oper_1}
\mathbb E{\left(\left\| {\int_0^t {{{\left( {t - \tau } \right)}^{\alpha  - 1}}b\left( {\tau ,\xi\left( \tau  \right)} \right)d\tau } }\right\|^2 \right)}
\leq
 \frac{{2{L^2}{T^{2\alpha }}}}{{2\alpha  - 1}} \|\xi\|_{\mathbb H^2}^2 + \frac{2{T^{2\alpha-1 }}}{2\alpha-1} \int_0^T \|b(\tau,0)\|^2\;d\tau.
\end{equation}
Now, using Ito's isometry (see e.g. \cite[p. 87]{Kloeden_92} ), we obtain 
\begin{align*}
\notag\mathbb E{\left(\left\| {\int\limits_0^t {{{\left( {t - \tau } \right)}^{\alpha  - 1}}{\sigma \left( {\tau ,\xi\left( \tau  \right)} \right)} d{W_\tau }} }\right\|^2 \right)}&=\sum_{1\leq i\leq d}\mathbb{E}\left(\int_0^t (t-\tau)^{\alpha-1}\sigma_i(\tau,\xi(\tau))dW_\tau\right)^2\\
&=\sum_{1\leq i\leq d}\mathbb{E}\left(\int_0^t (t-\tau)^{2\alpha-2}|\sigma_i(\tau,\xi(\tau))|^2 d\tau\right)\\
&= \mathbb E\int\limits_0^t {{{\left( {t - \tau } \right)}^{2\alpha  - 2}}{{\left\| {\sigma \left( {\tau ,\xi\left( \tau  \right)} \right)} \right\|}^2}d\tau }.
\end{align*}
From (H1), we also have
\begin{eqnarray*}
\|\sigma(\tau,\xi(\tau))\|^2
\leq
2 L^2 \|\xi(\tau)\|^2+ 2 \|\sigma(\tau,0)\|^2 \leq 2 L^2 \|\xi(\tau)\|^2+ 2 \|\sigma(\cdot,0)\|^2_{\infty}.
\end{eqnarray*}
Therefore, for all $t\in [0,T]$ we have
\begin{eqnarray*}
&& \mathbb E\left(\left\|{\int_0^t {{{\left( {t - \tau } \right)}^{\alpha  - 1}}{\sigma \left( {\tau ,\xi\left( \tau  \right)} \right)} d{W_\tau }} }\right\|^2 \right)\\
&\leq &
2{L^2} \mathbb E\int\limits_0^t {{\left( {t - \tau } \right)}^{2\alpha  - 2}}{{\left\| {\xi\left( \tau  \right)} \right\|}^2}\;d\tau
+ \;2 \|\sigma(\cdot,0)\|^2_{\infty} \int_0^t {{\left( {t - \tau } \right)}^{2\alpha  - 2}}\; d\tau \\
&\leq&
2{L^2}\frac{{{T^{2\alpha  - 1}}}}{{2\alpha  - 1}} \|\xi\|_{\mathbb H_2}^2 +
\frac{2 T^{2\alpha  - 1}}{2\alpha  - 1} \left\| {\sigma \left( {\cdot ,0} \right)} \right\|^2_{\infty}.
\end{eqnarray*}
This together with \eqref{FirstEstimate} and \eqref{oper_1} implies that $\left\| \mathcal{T}_\eta \xi\right\|_{\mathbb H^2}<\infty$. Hence, the map $\mathcal{T}_{\eta}$ is well-defined.
\end{proof}
To prove existence and uniqueness of solutions,we will show that the operator $\mathcal{T}_\eta$ defined as above is contractive under a suitable 
temporally weighted norm (cf. \cite[Remark 2.1]{Han_Kloeden} for the same method to prove the existence and uniqueness of solutions of stochastic differential equations). Here, the weight function is the Mittag-Leffler function $E_{2\alpha-1}(\cdot)$ defined as:
\[
E_{2\alpha-1}(t):=\sum_{k=0}^{\infty}\frac{t^k}{\Gamma((2\alpha-1)k+1)}\quad \hbox{for all}\; t\in\R.
\]
For more details on the Mittag-Leffler functions we  refer the reader to the book \cite[p. 16]{Podlubny}. The following result is a technical lemma which is used later to estimate the operator $\mathcal{T}_\eta$.
\begin{lemma}\label{ML_estimat}
For any $\alpha>\frac{1}{2}$ and $\gamma>0$, the following inequality holds:
\[
\frac{\gamma}{{\Gamma \left( {2\alpha  - 1} \right)}}\int_0^t {{{\left( {t - \tau } \right)}^{2\alpha  - 2}}{E_{2\alpha  - 1}}\left( {\gamma {\tau ^{2\alpha  - 1}}} \right)d\tau }  \le {{E_{2\alpha  - 1}}\left( {\gamma {t ^{2\alpha  - 1}}} \right)}.
\]
\end{lemma}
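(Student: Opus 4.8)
The plan is to expand the Mittag-Leffler function in its defining power series and integrate term by term, which reduces the entire estimate to a single Beta-function computation. Writing $\beta:=2\alpha-1>0$ for brevity, I would first record that the hypothesis $\alpha>\tfrac12$ is exactly what makes $\beta-1=2\alpha-2>-1$, so the kernel $(t-\tau)^{\beta-1}$ is integrable near $\tau=t$ and the left-hand integral is well defined. I would then substitute the series $E_\beta(\gamma\tau^\beta)=\sum_{k\ge0}\gamma^k\tau^{\beta k}/\Gamma(\beta k+1)$ into the integral and interchange the summation with the integration.

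The arithmetic core is the Beta integral, obtained by the change of variable $\tau=ts$:
\[
\int_0^t (t-\tau)^{\beta-1}\tau^{\beta k}\,d\tau = t^{\beta(k+1)}\,\frac{\Gamma(\beta)\,\Gamma(\beta k+1)}{\Gamma(\beta(k+1)+1)}.
\]
Inserting this into the term-by-term integral and cancelling the factor $\Gamma(\beta k+1)$ against the denominator coming from the series, the prefactor $\gamma/\Gamma(\beta)$ collapses everything to
\[
\frac{\gamma}{\Gamma(\beta)}\int_0^t (t-\tau)^{\beta-1} E_\beta(\gamma\tau^\beta)\,d\tau = \sum_{k=0}^\infty \frac{\gamma^{k+1}\,t^{\beta(k+1)}}{\Gamma(\beta(k+1)+1)}.
\]
Reindexing by $j=k+1$ identifies the right-hand side with $\sum_{j\ge1}\gamma^j t^{\beta j}/\Gamma(\beta j+1)$, which is precisely the series for $E_\beta(\gamma t^\beta)$ with its $j=0$ term (equal to $1/\Gamma(1)=1$) removed. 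Hence the left-hand side equals $E_\beta(\gamma t^\beta)-1$, and the asserted inequality follows at once, in fact with a slack of exactly $1$.

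I do not expect a genuine obstacle; the only point deserving a word of care is the justification of the termwise integration. This is clean because every summand is nonnegative on $[0,t]$, so Tonelli's theorem (equivalently monotone convergence) legitimizes the interchange, and the series converges for each fixed $t$ since $E_\beta$ is entire. Everything else is Beta-function bookkeeping and the single reindexing step, so the lemma reduces to an identity that happens to be slightly stronger than the stated bound.
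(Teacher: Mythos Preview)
Your proof is correct. Both you and the paper establish the same identity
\[
\frac{\gamma}{\Gamma(2\alpha-1)}\int_0^t (t-\tau)^{2\alpha-2} E_{2\alpha-1}(\gamma\tau^{2\alpha-1})\,d\tau = E_{2\alpha-1}(\gamma t^{2\alpha-1}) - 1,
\]
from which the inequality is immediate. The difference is purely in how the identity is obtained: the paper observes that $E_{2\alpha-1}(\gamma t^{2\alpha-1})$ solves the Caputo equation $^{C}D^{2\alpha-1}_{0+}x=\gamma x$, whose integral form is precisely the identity above (citing Diethelm's monograph for this fact), whereas you derive it from scratch by expanding the Mittag-Leffler series and evaluating the resulting Beta integrals term by term. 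The paper's argument is shorter and more conceptual, tying the estimate to the role of $E_\beta$ as the fundamental solution of the linear fractional ODE; your argument is more elementary and fully self-contained, requiring no outside reference and making the termwise justification (via Tonelli, since all terms are nonnegative) explicit.
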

\begin{proof}
	Let $\gamma>0$ be arbitrary. Consider the corresponding linear Caputo fractional differential equation of the following form
\begin{equation}\label{ML}
^{\!C}D^{2\alpha-1}_{0+} x(t)=\gamma x(t).
\end{equation}
The Mittag-Leffler function $E_{2\alpha-1}(\gamma t^{2\alpha-1})$ is a solution of \eqref{ML}, see e.g. \cite[p. 135]{Diethelm}. Hence, the following equality holds:
\[
E_{2\alpha-1}(\gamma t^{2\alpha-1})
=
1
+
\frac{\gamma}{\Gamma(2\alpha-1)}
\int_0^t (t-\tau)^{2\alpha-2} E_{2\alpha-1}(\gamma \tau^{2\alpha-1})\;d\tau,
\]
which completes the proof.
\end{proof}
We are now in a position to prove Theorem \ref{main_result_1}.
\begin{proof}[Proof of Theorem \ref{main_result_1}]
Let $T>0$ be arbitrary. Choose and fix a positive constant $\gamma$ such that
\begin{equation}\label{coefficient_weight}
\gamma>\frac{3L^2(T+1)\Gamma(2\alpha-1)}{\Gamma(\alpha)^2}.
\end{equation}
On the space $\mathbb{H}^2([0,T])$, we define a weighted norm $\|\cdot\|_{\gamma}$ as below
\begin{equation}\label{Weightnorm}
\|X\|_{\gamma}:=\sup_{t\in [0,T]}\sqrt{
\frac{\mathbb{E}(\|X(t)\|^2)}{E_{2\alpha -1}(\gamma t^{2\alpha-1})}}\qquad \hbox{for all }  X\in \mathbb{H}^2([0,T]).
\end{equation}
Obviously, two norms $\|\cdot\|_{\mathbb{H}^2}$ and $\|\cdot\|_{\gamma}$ are equivalent. Thus, $(\mathbb{H}^2([0,T]),\|\cdot\|_\gamma)$ is also a Banach space.\\

\noindent (i) Choose and fix $\eta\in \mathfrak X_0$. By virtue of Lemma \ref{Welldefine}, the operator $\mathcal{T}_\eta$ is well-defined. We will prove that the map $\mathcal{T}_{\eta}$ is contractive with respect to the norm $\|\cdot\|_{\gamma}$.

For this purpose, let $\xi,\widehat{\xi}\in \mathbb{H}^2([0,T])$ be arbitrary. From \eqref{Operator} and the inequality $\|x+y\|^2\leq 2 (\|x\|^2+ \|y\|^2)$ for all $x,y\in \R^d$, we derive the following inequalities for all $t\in [0,T]$:
\begin{eqnarray*}
&& \mathbb E\left( {{{\left\| {\mathcal{T}_{{\eta}}{\xi\left( t \right)}  - {\mathcal{T}_{{\eta}}} {\widehat \xi\left( t \right)}} \right\|}^2}} \right)\\
&\leq &	\frac{2}{{\Gamma {{\left( \alpha  \right)}^2}}}\mathbb E{\left(\left\| {\int\limits_0^t {{{\left( {t - \tau } \right)}^{\alpha  - 1}}\big({b\left( {\tau ,\xi\left( t \right)} \right) - b( {\tau ,\widehat \xi\left( t \right)})} \big)d\tau } }\right\|^2 \right)}\\
&& + \frac{2}{{\Gamma {{\left( \alpha  \right)}^2}}}\mathbb E{\left(\left\| {\int\limits_0^t {{{\left( {t - \tau } \right)}^{\alpha  - 1}}\big( {\sigma \left( {\tau ,\xi\left( t \right)} \right) - \sigma ( {\tau ,\widehat \xi\left( t \right)})} \big)d{W_\tau }} }\right\|^2 \right)}.
\end{eqnarray*}
Using the H\"{o}lder inequality and $(\textup{H1})$, we obtain 
\begin{eqnarray*}
&&\mathbb E{\left(\left\| {\int\limits_0^t {{{\left( {t - \tau } \right)}^{\alpha  - 1}}\big( {b( {\tau ,\xi\left( \tau \right)}) - b( {\tau ,\widehat \xi\left( \tau \right)} )} \big)d\tau } }\right\|^2 \right)}\\
&\le & {L^2}t\; \int_0^t {\left( {t - \tau } \right)}^{2\alpha  - 2}{{\mathbb E(\| {\xi\left( \tau \right) - \widehat \xi\left( \tau \right)} \|}^2)}\;d\tau .
\end{eqnarray*}
On the other hand, by Ito's isometry and $\textup{(H1)}$, we have
\begin{eqnarray*}
&& \mathbb E{\left(\left\| {\int\limits_0^t {{{\left( {t - \tau } \right)}^{\alpha  - 1}}\big( {\sigma \left( {\tau ,\xi\left( \tau \right)} \right) - \sigma ( {\tau ,\widehat \xi\left( \tau \right)} )} \big)d{W_\tau }} }\right\|^2 \right)}\\
& =&  \mathbb E\int_0^t {{\left( {t - \tau } \right)}^{2\alpha  - 2}}{{\| {\sigma \left( {\tau ,\xi\left( \tau \right)} \right) - \sigma ( {\tau ,\widehat \xi\left( \tau \right)})} \|}^2}\;d\tau \\
& \le &
{L^2} \int_0^t {{\left( {t - \tau } \right)}^{2\alpha  - 2}}{{\mathbb E(\| {\xi\left( \tau \right) - \widehat \xi\left( \tau \right)} \|}^2)}\;d\tau .
\end{eqnarray*}
Thus, for all $t\in [0,T]$ we have
\begin{eqnarray*}
	\E\left( {\left\| T_{\eta} {\xi\left( t \right)} - T_{\eta} {\widehat \xi( t )} \right\|^2} \right)
	&\le &
\frac{2L^2 (t+1)}{\Gamma(\alpha)^2}
	\int_0^t {{\left( {t - \tau } \right)}^{2\alpha  - 2}}{{\mathbb E(\| {\xi\left( \tau \right) - \widehat \xi\left( \tau \right)} \|}^2)}\;d\tau,
\end{eqnarray*}
which together with the definition of $\|\cdot\|_{\gamma}$ as in \eqref{Weightnorm} implies that
\[
	\frac{\E\left( {\left\| T_{\eta}{\xi\left( t \right)} - T_{\eta} {\widehat \xi( t )} \right\|^2} \right)}{E_{2\alpha-1}(\gamma t^{2\alpha-1})}
\leq
\frac{2L^2 (t+1)}{\Gamma(\alpha)^2}
\frac{\int_0^t (t-\tau)^{2\alpha-2} E_{2\alpha-1}(\gamma \tau^{2\alpha-1})\;d\tau}{ E_{2\alpha-1}(\gamma t^{2\alpha-1})} \|\xi-\widehat \xi\|_{\gamma}^2.
\]
In light of Lemma \ref{ML_estimat}, we have for all $t\in[0,T]$
\[
\frac{\E\left( {\left\| T_{\eta}{\xi\left( t \right)} - T_{\eta} {\widehat \xi( t )} \right\|^2} \right)}{E_{2\alpha-1}(\gamma t^{2\alpha-1})}
\leq
\frac{2 \Gamma(2\alpha-1) L^2 (T+1)}{ \Gamma(\alpha)^2 \gamma} \|\xi-\widehat \xi\|_{\gamma}^2.
\]
Consequently,
\[
\|T_{\eta}\xi- T_{\eta}\widehat\xi\|_{\gamma}
\leq
\kappa \|\xi-\widehat \xi\|_{\gamma},
\qquad\hbox{where } \kappa:=\sqrt{\frac{2 \Gamma(2\alpha-1) L^2 (T+1)}{ \Gamma(\alpha)^2 \gamma} }.
\]
By \eqref{coefficient_weight}, we have $\kappa<1$ and therefore the operator $\mathcal{T}_{\eta}$ is a contractive map on $(\mathbb{H}^2([0,T]),\|\cdot\|_{\gamma})$. Using the Banach fixed point theorem, there exists a unique fixed point of this map in $\mathbb{H}^2([0,T])$. This fixed point is also the unique solution of \eqref{MainEq_1} with the initial condition $X(0)=\eta$. The proof of this part is complete.\\

\noindent (ii) Choose and fix $T>0$ and $\eta,\zeta\in\mathfrak{X}_0$. Since $\phi(\cdot,\eta)$ and $\phi(\cdot,\zeta)$ are solutions of \eqref{MainEq_1} it follows that
\begin{eqnarray*}
\phi(t,\eta)-\phi(t,\zeta)
=\eta-\zeta &+& \frac{1}{\Gamma(\alpha)}\int_0^t (t-\tau)^{\alpha-1} (b(\tau,\phi(\tau,\eta))-b(\tau,\phi(\tau,\zeta)))\;d\tau\\
&+&
\frac{1}{\Gamma(\alpha)}\int_0^t (t-\tau)^{\alpha-1} (\sigma(\tau,\phi(\tau,\eta))-\sigma(\tau,\phi(\tau,\zeta)))\;dW_\tau
\end{eqnarray*}
Hence, using  the inequality $\|x+y+z\|^2\leq 3 (\|x\|^2+ \|y\|^2+ \|z\|^2)$ for all $x,y,z\in \R^d$, (H1), the H\"{o}lder inequality and Ito's isometry (see Part (i)), we obtain  
\begin{eqnarray*}
\mathbb E\left( \| \varphi(t,\eta)-\varphi(t,\zeta)\|^2 \right)
&\leq& \frac{3L^2(t + 1)}{\Gamma( \alpha)^2}\int_0^t ( t - \tau)^{2\alpha  - 2}\mathbb E(\| \varphi( \tau,\eta) - \varphi(\tau,\zeta) \|^2)\;d\tau\\
&& +3\mathbb{E}(\|\eta-\zeta\|^2).
\end{eqnarray*}
By definition of $\|\cdot\|_{\gamma}$, we have
\begin{eqnarray*}
\frac{\E\left( \| \varphi(t,\eta)-\varphi(t,\zeta)\|^2 \right)}{E_{2\alpha-1}(\gamma t^{2\alpha-1})}
&\leq &
 \frac{3L^2(t + 1)}{\Gamma( \alpha)^2}\frac{\int_0^t ( t - \tau)^{2\alpha  - 2}E_{2\alpha-1}(\gamma \tau^{2\alpha-1})\;d\tau}{E_{2\alpha-1}(\gamma t^{2\alpha-1})}\times\\
 &&\|\phi(\cdot,\eta)-\phi(\cdot,\zeta)\|_{\gamma}^2 + 3\mathbb{E}(\|\eta-\zeta\|^2).
\end{eqnarray*}
By virtue of Lemma \ref{ML_estimat}, we have
\[
\|\phi(\cdot,\eta)-\phi(\cdot,\zeta)\|_{\gamma}^2
\leq
\frac{3L^2 (T+1) \Gamma(2\alpha-1)}{\gamma \Gamma(\alpha)^2}\|\phi(\cdot,\eta)-\phi(\cdot,\zeta)\|_{\gamma}^2
+
3\|\eta-\zeta\|_{\rm{ms}}^2.
\]
Thus, by \eqref{coefficient_weight} we have
\begin{equation*}\label{depend_conti}
\left( 1-\frac{{3{L^2}\left( {T + 1} \right)\Gamma \left( {2\alpha  - 1} \right)}}{{ \gamma\Gamma {{\left( \alpha  \right)}^2}}}\right)
\|\phi(\cdot,\eta)-\phi(\cdot,\zeta)\|_{\gamma}^2
\leq
3\|\eta-\zeta\|_{\rm{ms}}^2.
\end{equation*}
Hence,
\[
\lim_{\eta\to \zeta}\sup_{t\in [0,T]}\|\varphi(t,\eta)-\varphi(t,\zeta)\|_{\rm{ms}}=0.
\]
The proof is complete.
\end{proof}
We conclude this section with a discussion on the gap in the proof of global existence of solutions for Caputo fractional stochastic differential equation in \cite{Kloeden_16}.
\begin{remark}\label{Remark1}

For $\alpha\in (\frac{1}{2},1)$, we consider a Caputo fractional stochastic differential equation on a Banach space $X$ of the following form
\begin{equation}\label{tempo_1}
^{C}D^\alpha_{0+}x(t)=b(t,x(t))+\sigma(t,x(t))\frac{dW_t}{dt},
\end{equation}
where $t\in (0,T]$, $b,\sigma:[0,T]\times \mathbb{L}^2 (\Omega;X)\rightarrow \mathbb{L}^2(\Omega;X)$ are measurable functions satisfying the following conditions:
\begin{itemize}
	\item[(i)] there exists a constant $L>0$ such that for all $t\in [0,T]$ and $x,y\in \mathbb{L}^2 $
\[
\mathbb{E}(\|b(t,x)-b(t,y)\|^2)+\mathbb{E}(\|\sigma(t,x)-\sigma(t,y)\|^2)\leq L\mathbb{E}(\|x-y\|^2);
\]
	\item[(ii)] the functions $b,\sigma$ are bounded, i.e. for some $x_0\in \mathbb{L}^2(\Omega;X)$ and $b>0$, there exists a constant $M>0$ such that
	\[
	\mathbb{E}(\|b(t,x)\|^2)\leq M^2,\quad\mathbb{E}(\|\sigma(t,x)\|^2)\leq M^2
	\]
	for all $(t,x)\in R_0:=\{(t,x):0\leq t\leq T, \mathbb{E}(\|x-x_0\|^2)\leq b^2\}$.
\end{itemize}
Using a similar approach as in \cite{Caraballo14, Lakshmikantham08}, the authors in \cite{Kloeden_16} have proven the existence and uniqueness of local solutions on a small interval $[0,T_b]$, where $T_b$ is a parameter depending on $b$ defined as in \cite[Theorem 3.3, p. 209]{Kloeden_16}. Conerning the global existence of solutions, the authors used the method of successive approximation, see \cite[Theorem 3.4, p. 209]{Kloeden_16}. However, this method seems not applicable to fractional stochastic differential equations. More precisely, by the history-dependence of solutions to fractional differential equations, the solutions of the problem
\[
\begin{cases}
D^\alpha x(t)&=b(t,x(t))+\sigma(t,x(t))\frac{dW_t}{dt},\quad t\in [T^*,T^*+\delta),\\
x(T^*)&=x^*\in \mathbb{L}^2(\Omega;X),
\end{cases}
\]
and of the problem
\[
\begin{cases}
D^\alpha x(t)&=b(t,x(t))+\sigma(t,x(t))\frac{dW_t}{dt},\quad t\in [0,T^*+\delta),\\
x(0)&=x_0\in \mathbb{L}^2(\Omega;X)
\end{cases}
\]
are not the same by using a shift of the time.

\end{remark}
\subsection{A lower bound on the asymptotic separation of two distinct solutions}
\begin{proof}[Proof of Theorem \ref{main_result2}]
Suppose a contradiction, i.e. there exists a positive constant $\lambda>\frac{2\alpha}{1-\alpha}$ such that
\begin{equation}\label{A_1}
\limsup_{t\to\infty}\;t^{\lambda}\|\varphi(t,\eta)-\varphi(t,\zeta)\|_{\rm{ms}}<\infty,
\end{equation}
for some $\eta,\zeta\in \frak X_0$, $\eta\ne \zeta$.
Then, there exist constants $T>0$ and $K>0$ such that
\begin{equation}\label{A_2}
\|\varphi(t, \eta)-\varphi(t,\zeta)\|_{\rm{ms}}^2
\leq Kt^{-2\lambda}
\qquad \hbox{for all}\; t\geq T.
\end{equation}
From \eqref{IntegrableForm} and  the inequality $\|x+y+z\|^2\leq 3 (\|x\|^2+ \|y\|^2+ \|z\|^2)$ for all $x,y,z\in \R^d$, we have
\begin{align*}
\|\eta-\zeta\|^2 & \leq 3\|\varphi(t,\eta)-\varphi(t,\zeta)\|^2+\frac{3}{\Gamma(\alpha)^2}\left\|\int_0^t (t-\tau)^{\alpha-1}(\sigma(\tau,\eta)-\sigma(\tau,\zeta))\;dW_\tau\right\|^2\\
&\hspace{1cm}+\frac{3}{\Gamma(\alpha)^2}\left\|\int_0^t (t-\tau)^{\alpha-1}(b(\tau,\varphi(\tau,\eta)-b(\tau,\varphi(\tau,\zeta))\;d\tau\right\|^2.
\end{align*}
Taking the expectation of both sides and using the Ito's isometry, (H1), we obtain  
\begin{eqnarray*}
\|\eta-\zeta\|_{\rm{ms}}^2
&\leq&
3\E(\|\varphi(t, \eta)-\varphi(t,\zeta)\|^2)\\[1ex]
&& +
\frac{3L^2}{\Gamma(\alpha)^2}\mathbb{E}\left(\int_0^t (t-\tau)^{\alpha-1}\|\varphi(\tau,\eta)-\varphi(\tau,\zeta)\|\;d\tau\right)^2\\[1ex]
&&
+ \frac{3L^2}{\Gamma(\alpha)^2}\int_0^t (t-\tau)^{2\alpha-2} \|\varphi(\tau, \eta)-\varphi(\tau,\zeta)\|^2_{\rm{ms}}\;d\tau.
\end{eqnarray*}
From \eqref{A_2}, we derive that $\lim_{t\to\infty} \E(\|\varphi(t, \eta)-\varphi(t,\zeta)\|^2)=0$. Hence, to derive a contradiction and therefore to complete the proof it is sufficient to show that
\begin{equation}\label{Aim1}
\lim_{t\to\infty} I_1(t)=0, \quad\hbox{where }I_1(t):= \mathbb{E}\left(\int_0^t (t-\tau)^{\alpha-1}\|\varphi(\tau,\eta)-\varphi(\tau,\zeta)\|\;d\tau\right)^2
\end{equation}
and
\begin{equation}\label{Aim2}
\lim_{t\to\infty} I_2(t)=0, \quad\hbox{where }I_2(t):=\int_0^t (t-\tau)^{2\alpha-2}\|\varphi(\tau, \eta)-\varphi(\tau,\zeta)\|^2_{\rm{ms}}\;d\tau.
\end{equation}
To prove \eqref{Aim1}, choose and fix $\delta\in \left(\frac{\alpha}{\lambda},\frac{1-\alpha}{2}\right)$.  Note that the existence of such a $\delta$ comes from the fact that  $\frac{\alpha}{\lambda}<\frac{1-\alpha}{2}$ (equivalently, $\lambda>\frac{2\alpha}{1-\alpha}$). For $t>\max\{T^{1/\delta},1\}$, we have
\begin{eqnarray*}
I_1(t)
 &\leq&
 2 \mathbb{E}\left(\int_0^{t^\delta} (t-\tau)^{\alpha-1}\|\varphi(\tau, \eta)-\varphi(\tau,\zeta)\|\;d\tau\right)^2\\
 && + 2  \mathbb{E}\left(\int_{t^\delta}^{t} (t-\tau)^{\alpha-1}\|\varphi(\tau, \eta)-\varphi(\tau,\zeta)\|\;d\tau\right)^2.
\end{eqnarray*}
Using the H\"{o}lder inequality, we obtain 
\begin{eqnarray*}
I_1(t)
&\leq&
2\int_0^{t^\delta} (t-\tau)^{2\alpha-2}\;d\tau
\int_0^{t^\delta} \|\phi(\tau,\eta)-\phi(\tau,\zeta)\|_{\rm{ms}}^2\;d\tau\\
&&
+
2\int_{t^\delta}^t (t-\tau)^{2\alpha-2}\;d\tau
\int_{t^\delta}^t \|\phi(\tau,\eta)-\phi(\tau,\zeta)\|_{\rm{ms}}^2\;d\tau.
\end{eqnarray*}
Since
\[
\int_0^{t^\delta} (t-\tau)^{2\alpha-2}\;d\tau
\leq \frac{t^{\delta}}{(t-t^{\delta})^{2-2\alpha}},
\int_{t^\delta}^t (t-\tau)^{2\alpha-2}\;d\tau
\leq \frac{(t-t^\delta)^{2\alpha-1}}{2\alpha-1}
\]
it follows together with \eqref{A_2} that
\begin{eqnarray*}
I_1(t)
&\leq &
\frac{2 t^{2\delta} \sup_{t\geq 0}\|\phi(\tau,\eta)-\phi(\tau,\zeta)\|_{\rm{ms}}}{(t-t^\delta)^{2-2\alpha}}
+
\frac{2K(t-t^\delta)^{2\alpha-1}}{2\alpha-1} \int_{t^{\delta}}^t \tau^{-2\lambda}\;d\tau \\
&\leq &
\frac{2 t^{2\delta} \sup_{t\geq 0}\|\phi(\tau,\eta)-\phi(\tau,\zeta)\|_{\rm{ms}}}{(t-t^\delta)^{2-2\alpha}}
+
\frac{2K (t-t^{\delta})^{2\alpha}}{ (2\alpha-1)t^{2\delta\lambda}}.
\end{eqnarray*}
By definition of $\delta$, we have $2\delta<2-2\alpha$ and $2\alpha< 2\delta\lambda$. Hence, letting $t\to\infty$ in the preceding inequality yields that $\lim_{t\to\infty} I_1(t)=0$ and thus \eqref{Aim1} is proved. Concerning the assertion \eqref{Aim2}, let $t\geq T$ be arbitrary. By \eqref{A_2}, we have
\begin{eqnarray*}
I_2(t)
&\leq &
\int_0^{T} (t-\tau)^{2\alpha-2} \|\varphi(\tau, \eta)-\varphi(\tau,\zeta)\|^2_{\rm{ms}}\;d\tau
+ K \int_T^{t} (t-\tau)^{2\alpha-2}\tau^{-2\lambda}\;d\tau\\
&\leq &
\frac{T}{(t-T)^{2-2\alpha}} \sup_{t\geq 0} \|\varphi(\tau, \eta)-\varphi(\tau,\zeta)\|^2_{\rm{ms}} +  K \int_T^{t} (t-\tau)^{2\alpha-2}\tau^{-2\lambda}\;d\tau.
\end{eqnarray*}
Therefore,
\begin{equation}\label{A_3}
\limsup_{t\to\infty} I_2(t)
\leq
K \limsup_{t\to\infty}  \int_T^{t} (t-\tau)^{2\alpha-2}\tau^{-2\lambda}\;d\tau.
\end{equation}
Note that for $t\geq 2T$ we have
\begin{eqnarray*}
\int_T^{t} (t-\tau)^{2\alpha-2}\tau^{-2\lambda}\;d\tau
&= &
\int_T^{\frac{t}{2}} (t-\tau)^{2\alpha-2}\tau^{-2\lambda}\;d\tau
+
\int_{\frac{t}{2}}^{t} (t-\tau)^{2\alpha-2}\tau^{-2\lambda}\;d\tau\\
&\leq &
\frac{2^{2-2\alpha}}{t^{2-2\alpha}}\int_T^{\frac{t}{2}}\tau^{-2\lambda}\;d\tau
+\left(\frac{t}{2}\right)^{-2\lambda}\int_{\frac{t}{2}}^t (t-\tau)^{2\alpha-2}\;d\tau\\
&\leq &
\frac{2^{2-2\alpha} T^{-2\lambda+1}}{(2\lambda-1) t^{2-2\alpha}}
+
\frac{1}{2\alpha-1} \left(\frac{t}{2}\right)^{2\alpha-1-2\lambda},
\end{eqnarray*}
which together with \eqref{A_3} and the fact that $\alpha\in\left(\frac{1}{2},1\right), \lambda>\frac{2\alpha}{1-\alpha} >\alpha-\frac{1}{2}$, implies that $\lim_{t\to\infty}I_2(t)=0$. The proof is complete.
\end{proof}
\section*{Acknowledgement}
This research is funded by the Vietnam National Foundation for Science and Technology Development (NAFOSTED) under Grant Number 101.03-2017.01.





%
%
\end{document}